\documentclass[12pt]{article}
\usepackage[a4paper,margin=0.8in]{geometry}

\usepackage[colorlinks,citecolor=magenta,linkcolor=black]{hyperref}
\pdfpagewidth=\paperwidth \pdfpageheight=\paperheight
\usepackage{amsfonts,amssymb,amsthm,amsmath,eucal,tabu,url}
\usepackage{pgf}
 \usepackage{array}
 \usepackage{tikz-cd}
 \usepackage{pstricks}
 \usepackage{pstricks-add}
 \usepackage{pgf,tikz}
 \usetikzlibrary{automata}
 \usetikzlibrary{arrows}
 \usepackage{indentfirst}
 \pagestyle{myheadings}
\usepackage{tabularx} 


\theoremstyle{plain}
\newtheorem{thm}{Theorem}[section]
\newtheorem{theorem}[thm]{Theorem}
\newtheorem*{theoremA}{Main Theorem}

\newtheorem{proposition}[thm]{Proposition}
\newtheorem{corollary}[thm]{Corollary}

\theoremstyle{definition}
\newtheorem{definition}[thm]{Definition}
\newtheorem{remark}[thm]{Remark}
\newtheorem{example}[thm]{Example}

\newtheorem{problem}[thm]{Problem}

\newtheorem{thevarthm}[thm]{\varthmname}

\newenvironment{varthm*}[1]{\trivlist\item[]{\bf #1.}\it}{\endtrivlist}


\renewcommand\geq{\geqslant}

\renewcommand\leq{\leqslant}

\let\tilde=\widetilde

\newcommand\be{\begin{eqnarray*}}
\newcommand\ee{\end{eqnarray*}}

\newcommand\newop[2]{\def#1{\mathop{\rm #2}\nolimits}}
\newop\log{log}
\newop\ord{ord}
\newop\Gal{Gal}
\newop\SL{SL}
\newop\Bl{Bl}
\newop\mult{mult}
\newop\mass{mass}
\newop\div{div}
\newop\codim{codim}
\newop\sing{sing}
\newop\vdim{vdim}
\newop\edim{edim}
\newop\Ass{Ass}
\newop\size{size}
\newop\reg{reg}
\newop\satdeg{satdeg}
\newop\supp{supp}
\newop\Neg{Neg}
\newop\Nef{Nef}
\newop\Nefh{Nef_H}
\newop\Eff{Eff}
\newop\Zar{Zar}
\newop\MB{MB}
\newop\MBxC{MB\mathit{(x,C)}}
\newop\NnB{NnB}
\newop\Bigg{Big}
\newop\Effbar{\overline{\Eff}}

\def\keywordname{{\bfseries Keywords}}%
\def\keywords#1{\par\addvspace\medskipamount{\rightskip=0pt plus1cm
\def\and{\ifhmode\unskip\nobreak\fi\ $\cdot$
}\noindent\keywordname\enspace\ignorespaces#1\par}}
\def\subclassname{{\bfseries Mathematics Subject Classification
(2020)}\enspace}
\def\subclass#1{\par\addvspace\medskipamount{\rightskip=0pt plus1cm
\def\and{\ifhmode\unskip\nobreak\fi\ $\cdot$
}\noindent\subclassname\ignorespaces#1\par}}

\begin{document}
\title{On nearly free arrangements of lines with nodes and triple points}
\author{Jakub Kabat}
\date{\today}
\maketitle
\thispagestyle{empty}
\begin{abstract}
We provide a classification result on nearly free arrangements of lines in the complex projective plane with nodes and triple points.
\keywords{14N20, 52C35, 32S22}
\subclass{nearly free curves, line arrangements}
\end{abstract}

\section{Introduction}
In the recent years there is a great interest on line arrangements in the complex projective plane that are free. This is due to the reason that the celebrated Terao's conjecture predicts that the freeness of complex line arrangements in the projective plane is determined by the combinatorics. In the context of a potential counterexample to Terao's conjecture, Dimca and Sticlaru in \cite{DS} defined a new class of line arrangements that is called nearly free. Roughly speaking, if Terao's conjecture fails, then it is expected that we have two combinatorially equivalent line arrangements, one is free and the second is nearly free. Here our aim is to understand nearly free complex line arrangement that are nearly free, and they have only nodes and triple intersection points. This setting is motivated by a very recent paper by Dimca and Pokora \cite{DimcaPokora} where the authors study free conic-line arrangement with nodes, tacnodes, and ordinary triple points. Based on their ideas, we perform our classification procedure and we obtain the following result.
\begin{theoremA}
Let $\mathcal{L}\subset \mathbb{P}^{2}_{\mathbb{C}}$ be an arrangement of $d$ lines with only nodes and triple intersection points. Suppose that $\mathcal{L}$ is nearly free, then $d \in \{4,5,6,7,8\}$.
\end{theoremA}
Here is the structure of the paper. In Section 2, we recall basics on nearly free reduced plane curves in the plane. In Section 3, we provide some combinatorial constraints on the number of lines of nearly free line arrangements with nodes and triple points. In Section 4, using deformation arguments applied on free line arrangements with nodes and triple points, we perform a classification procedure that will eventually lead to Main Theorem.
\section{Introduction to nearly free curves}
 Let $C$ be a reduced curve $\mathbb{P}^{2}_{\mathbb{C}}$ of degree $d$ given by $f \in S :=\mathbb{C}[x,y,z]$. We denote by $J_{f}$ the Jacobian ideal generated by the partials derivatives $\partial_{x}f, \, \partial_{y}f, \, \partial_{z}f$. Moreover, we denote by $r:={\rm mdr}(f)$ the minimal degree of a relation among the partial derivatives, i.e., the minimal degree $r$ of a triple $(a,b,c) \in S_{r}^{3}$ such that 
$$a\cdot \partial_{x} f + b\cdot \partial_{y}f + c\cdot \partial_{z}f = 0.$$
We denote by $\mathfrak{m} = \langle x,y,z \rangle$ the irrelevant ideal. Consider the graded $S$-module $N(f) = I_{f} / J_{f}$, where $I_{f}$ is the saturation of $J_{f}$ with respect to $\mathfrak{m}=\langle x,y,z\rangle$.
\begin{definition}
We say that a reduced plane curve $C$ is \emph{nearly free} if $N(f) \neq 0$ and for every $k$ one has ${\rm dim} \, N(f)_{k} \leq 1$.
\end{definition}
To complete the picture, we need to define free curves.
\begin{definition}
We say that a reduced plane curve $C$ is \emph{free} if $N(f) = 0$.
\end{definition}
In order to study the nearly freeness of a reduced plane curve $C \, : \, f=0$, $f \in S_{d}$, Dimca and Sticlaru provided a homological criterion on the Milnor algebra $M(f) := S/J_{f}$.
\begin{theorem}[Dimca-Sticlaru]
\label{DimSti}
If $C$ is a nearly free curve of degree $d$ given by $f \in S$, then the minimal free resolution of the Milnor algebra $M(f)$ has the following form:
\begin{equation*}
\begin{split}
0 \rightarrow S(-b-2(d-1))\rightarrow S(-d_{1}-(d-1))\oplus S(-d_{2}-(d-1)) \oplus S(-d_{3}-(d-1)) \\ \rightarrow S^{3}(-d+1)\rightarrow S \rightarrow M(f) \rightarrow 0
\end{split}
\end{equation*} for some integers $d_{1},d_{2},d_{3}, b$ such that $d_{1} + d_{2} = d$, $d_{2} = d_{3}$, and $b=d_{2}-d+2$. In that case, the pair $(d_{1},d_{2})$ is called the set of exponents of $C$.
\end{theorem}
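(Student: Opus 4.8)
The plan is to read the resolution off from the depth of $M(f)$ and then to pin down the numerical data by a Chern-class computation combined with the self-duality of rank-two bundles on the plane. Since $C$ is reduced its singular locus is a finite set of points, so the Jacobian subscheme $\Sigma=V(J_f)$ is zero-dimensional in $\mathbb{P}^2$ and $M(f)$ has Krull dimension one over $S$. The three partials have degree $d-1$ and minimally generate $J_f$, so the resolution starts $S^3(-(d-1))\xrightarrow{(\partial_x f,\,\partial_y f,\,\partial_z f)} S\to M(f)\to 0$. Near-freeness gives $N(f)=H^0_{\mathfrak m}(M(f))\neq 0$, hence $\operatorname{depth} M(f)=0$, and by Auslander--Buchsbaum $\operatorname{pd}_S M(f)=3$. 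Thus the minimal free resolution has length exactly three:
\begin{equation*}
0\longrightarrow F_3\longrightarrow F_2\longrightarrow S^3(-(d-1))\longrightarrow S\longrightarrow M(f)\longrightarrow 0 .
\end{equation*}

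Next I would control $F_2$ and $F_3$. The image of $F_2\to S^3(-(d-1))$ is the syzygy module $AR(f)=\{(a,b,c):a\partial_x f+b\partial_y f+c\partial_z f=0\}$, so the columns of $F_2\to F_1$ are its minimal generators and $F_3$ encodes the relations among them. The crucial claim is that $\operatorname{rank}F_3=1$. I would get this from the exact sequence $0\to N(f)\to M(f)\to S/I_f\to 0$: since $S/I_f$ is arithmetically Cohen--Macaulay of projective dimension two, $\operatorname{Tor}_3^S(M(f),\mathbb{C})\cong\operatorname{Tor}_3^S(N(f),\mathbb{C})$, and for the finite-length module $N(f)$ duality gives $\operatorname{rank}F_3=\beta_3(N(f))=\mu(N(f)^{\vee})$. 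The Jacobian module $N(f)$ is graded self-dual about $T/2$ with $T=3(d-2)$, so $\mu(N(f)^{\vee})=\mu(N(f))$; combined with the hypothesis $\dim_{\mathbb C}N(f)_k\le 1$, the thin self-dual module $N(f)$ is cyclic, whence $\operatorname{rank}F_3=1$. Writing $F_3=S(-b-2(d-1))$, vanishing of the alternating sum of ranks (valid because $M(f)$ is torsion) forces $\operatorname{rank}F_2=3$, so $AR(f)$ has three minimal generators of internal degrees $d_1,d_2,d_3$ and $F_2=\bigoplus_{i=1}^3 S(-d_i-(d-1))$.

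For the three identities I would sheafify the resolution (note $\widetilde{M(f)}=0$ since $M(f)$ is supported on points), producing the rank-two bundle $E=\widetilde{AR(f)}$ inside
\begin{gather*}
0\longrightarrow E\longrightarrow \mathcal{O}_{\mathbb{P}^2}^{\,3}(-(d-1))\longrightarrow \mathcal{J}_{\Sigma}\longrightarrow 0,\\
0\longrightarrow \mathcal{O}(-b-2(d-1))\longrightarrow \bigoplus_{i=1}^3 \mathcal{O}(-d_i-(d-1))\longrightarrow E\longrightarrow 0 .
\end{gather*}
From the first sequence $c_1(E)=-3(d-1)$, so the untwist $E_0=E(d-1)$ has $c_1(E_0)=-(d-1)$; matching $c_1$ through the second sequence (equivalently, the order-two vanishing of the Hilbert-series numerator of $M(f)$ at $t=1$) gives $d_1+d_2+d_3=b+2(d-1)$. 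For the remaining two I would exploit the tautological self-duality of a rank-two bundle, $E_0^{\vee}\cong E_0\otimes(\det E_0)^{-1}=E_0(d-1)$: dualizing the minimal presentation of $E_0$ and comparing with the original one---equivalently, applying Serre duality to the intermediate cohomology $H^1_*(E_0)\cong N(f)$---makes the Betti table palindromic, forcing $d_2=d_3$ and placing the single relation in internal degree $d_2+1$, i.e.\ $b+(d-1)=d_2+1$. Substituting back yields $d_1+d_2=d$ and $b=d_2-d+2$.

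The heart of the argument, and the step I expect to be the main obstacle, is the middle one: showing that the last free module has rank one and that the syzygy degrees are $d_1,d_2,d_2$. Both facts rest on the self-duality of the Jacobian module $N(f)$ (via Serre duality on $E_0$), and it is precisely the near-freeness hypothesis $\dim N(f)_k\le 1$ that excludes extra generators and collapses the resolution to the stated minimal shape.
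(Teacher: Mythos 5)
You should first note a point of reference: the paper does not prove this statement at all --- it is imported verbatim as a known theorem of Dimca and Sticlaru (the references [DS] and [DimcaSticlaru]) and used as a black box, so your proposal can only be compared against the original proof in those papers. Your outline does in fact follow the same architecture as theirs: depth zero from $N(f)=H^0_{\mathfrak{m}}(M(f))\neq 0$ plus Auslander--Buchsbaum to get projective dimension $3$; the identification $\operatorname{Tor}_3^S(M(f),\mathbb{C})\cong\operatorname{Tor}_3^S(N(f),\mathbb{C})$ via $0\to N(f)\to M(f)\to S/I_f\to 0$; the syzygy bundle and Serre duality to obtain the self-duality of $N(f)$ about $T/2$ with $T=3(d-2)$ (this is Sernesi's duality, which Dimca--Sticlaru invoke); and Chern-class/Hilbert-series bookkeeping for the degree identities. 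All of those steps are sound.

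The genuine gap sits at exactly the step you flag as the heart of the argument: the claim that a thin ($\dim N(f)_k\le 1$) self-dual graded module of finite length must be cyclic is false. Take $N=\mathbb{C}(-a)\oplus\mathbb{C}(-(T-a))$ with $2a\neq T$: it satisfies $N\cong N^{\vee}(-T)$, has $\dim N_k\le 1$ for every $k$, yet needs two generators and has two-dimensional socle. Since $\operatorname{rank}F_3=\beta_3(N(f))=\dim_{\mathbb{C}}\operatorname{soc}N(f)$, thinness plus self-duality alone cannot force $\operatorname{rank}F_3=1$; the whole difficulty is to exclude such \emph{gaps} in the Hilbert function of $N(f)$. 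What rules them out is an additional, non-formal structure theorem: the Hilbert function of the Jacobian module is symmetric and monotone increasing up to the middle degree (a weak-Lefschetz-type property of dimension-one almost complete intersections, due to Dimca--Popescu and used by Dimca--Sticlaru). Granting that, thinness forces $\dim N(f)_k=1$ exactly on an interval symmetric about $T/2$, with multiplication by a generic linear form bijective along it, whence $N(f)$ is cyclic with one-dimensional socle and $\operatorname{rank}F_3=1$; your Chern-class relation and the duality-driven palindromy then go through (though you should still spell out why the pairing forces $d_2=d_3$ rather than $d_1=d_2$). In short: the skeleton matches the literature proof, but the proposal is missing the one essential input --- the no-gap/Lefschetz property of $N(f)$ --- and the step you substitute for it (``thin $+$ self-dual $\Rightarrow$ cyclic'') is not a valid inference.
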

\begin{example}
Let us consider the rational cuspidal curve $C \, : \, y^2z = x^3$. It is known that $C$ is a nearly free curve. We can compute the minimal free resolution of $M(f)$ which has the following form:
$$0 \rightarrow S(-5) \rightarrow S(-4)^{2} \oplus S(-3) \rightarrow S(-2)^{3} \rightarrow S \rightarrow M(f) \rightarrow 0.$$
It means that the exponents are $(1,2)$.
\end{example}

From now on we stick to line arrangements in $\mathbb{P}^{2}_{\mathbb{C}}$. In order to study their nearly freeness, we will use \cite[Theorem 1.3]{Dimca1} which turns out to be a vital technical tool.
\begin{theorem}[Dimca]
\label{Dim}
Let $\mathcal{L}\subset \mathbb{P}^{2}_{\mathbb{C}}$ be an arrangement of $d$ lines and let $f=0$ be its defining equation. Denote by $r: = {\rm mdr}(f)$. Assume that $r\leq d/2$, then $\mathcal{L}$ is nearly free if and only if
\begin{equation}
\label{Milnor}
r^2 - r(d-1) + (d-1)^2 = \mu(\mathcal{L})+1,
\end{equation}
where $\mu (\mathcal{L})$ is the total Milnor number of $\mathcal{L}$, i.e.,
$$\mu(\mathcal{L}) = \sum_{p \in {\rm Sing}(\mathcal{L})} ({\rm mult}_{p}-1)^{2}.$$
\end{theorem}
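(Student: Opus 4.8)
The plan is to translate the near-freeness of $\mathcal{L}$ into a single numerical condition on its global Tjurina number and then read that condition off a rank-two syzygy bundle on $\mathbb{P}^2$. First I would reduce the Milnor number to the Tjurina number: every singular point of a line arrangement is an ordinary multiple point, hence weighted homogeneous, so the local Milnor and Tjurina numbers coincide, $\mu_p = \tau_p = (\mathrm{mult}_p - 1)^2$, and therefore $\mu(\mathcal{L}) = \tau(\mathcal{L})$, the total Tjurina number. A one-line rearrangement identifies the left-hand side of \eqref{Milnor} with $\tau(d,r)_{\max} := (d-1)^2 - r(d-1-r)$, so that \eqref{Milnor} is equivalent to the single requirement $\tau(\mathcal{L}) = \tau(d,r)_{\max} - 1$. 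It thus suffices to prove that, under the hypothesis $r \le d/2$, the arrangement is nearly free if and only if its global Tjurina number is exactly one below this du Plessis--Wall maximum.

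The main tool I would use is the rank-two bundle $\mathcal{E}_{\mathcal{L}}$ on $\mathbb{P}^2$ obtained by sheafifying the module of Jacobian syzygies $AR(f)$; it satisfies $c_1(\mathcal{E}_{\mathcal{L}}) = -(d-1)$ and $c_2(\mathcal{E}_{\mathcal{L}}) = (d-1)^2 - \tau(\mathcal{L})$. By definition of $r = \mathrm{mdr}(f)$, the bundle $\mathcal{E}_{\mathcal{L}}(r)$ has a nonzero global section but none in lower degree; minimality of $r$ forces this section to vanish in codimension two, on a finite scheme $Z$, since any divisorial zero could be divided out to produce a section of strictly smaller degree. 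The section sequence $0 \to \mathcal{O}_{\mathbb{P}^2} \to \mathcal{E}_{\mathcal{L}}(r) \to \mathcal{J}_Z(2r-d+1) \to 0$ then yields, via $c_2(\mathcal{E}_{\mathcal{L}}(r)) = c_2(\mathcal{E}_{\mathcal{L}}) + r\,c_1(\mathcal{E}_{\mathcal{L}}) + r^2$, the clean identity $\deg Z = \tau(d,r)_{\max} - \tau(\mathcal{L})$. In particular $\deg Z \ge 0$ recovers the du Plessis--Wall bound, while $\deg Z = 0$ means the section is nowhere vanishing, so $\mathcal{E}_{\mathcal{L}}$ splits as a sum of line bundles, which is the bundle-theoretic form of the freeness criterion; the hypothesis $r \le d/2$ is what keeps us in the regime where $\tau(d,r)_{\max}$ has this form and $\mathcal{E}_{\mathcal{L}}(r)$ carries the extremal section.

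Writing $\ell := \deg Z = \tau(d,r)_{\max} - \tau(\mathcal{L})$, the reformulated claim becomes $\ell = 1 \iff \mathcal{L}$ nearly free. For the forward implication I would start from the resolution of $M(f)$ in Theorem \ref{DimSti}: a nearly free arrangement with exponents $(d_1,d_2)$, $d_1 + d_2 = d$, has minimal syzygy degree $r = d_1 \le d/2$, and reading the graded Betti numbers off that resolution (equivalently, taking $c_2$ of the non-split extension it encodes) gives $\tau(\mathcal{L}) = \tau(d,r)_{\max} - 1$, i.e. $\ell = 1$. For the converse I would argue that $\ell = 1$ means $Z$ is a single reduced point, so $\mathcal{E}_{\mathcal{L}}(r)$ is the essentially unique non-split extension of $\mathcal{J}_{\mathrm{pt}}(2r-d+1)$ by $\mathcal{O}_{\mathbb{P}^2}$; translating this extension back into a minimal graded presentation of $AR(f)$ produces exactly three generators in degrees $d_1, d_2, d_2$ with a single relation, which is the resolution shape of Theorem \ref{DimSti} and forces $\dim N(f)_k \le 1$ for all $k$. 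Combined with $\mu(\mathcal{L}) = \tau(\mathcal{L})$, the chain \eqref{Milnor} $\iff \tau(\mathcal{L}) = \tau(d,r)_{\max}-1 \iff \ell = 1 \iff \mathcal{L}$ nearly free closes the argument.

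The main obstacle is the converse step, namely upgrading the purely numerical information $\ell = 1$ to the structural conclusion that the syzygy module has the nearly free presentation. Concretely, one must show that a length-one degeneration scheme $Z$ leaves no room for $\mathcal{E}_{\mathcal{L}}$ to be anything other than the one non-split extension attached to a point, and then match that bundle with the module-level condition $\dim N(f)_k \le 1$; this is exactly where the bundle-to-graded-module dictionary and the self-duality of $N(f)$ enter, and where the hypothesis $r \le d/2$ is indispensable, since for $r > d/2$ the extremal section no longer lives on $\mathcal{E}_{\mathcal{L}}(r)$, the formula for $\tau(d,r)_{\max}$ acquires a correction term, and the equivalence between defect one and near-freeness can break down.
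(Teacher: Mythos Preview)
The paper does not prove this theorem; it is quoted verbatim from \cite[Theorem~1.3]{Dimca1} and used as a black box. There is therefore no ``paper's own proof'' to compare your proposal against.

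That said, your sketch is a faithful outline of Dimca's original argument: the identification $\mu(\mathcal{L})=\tau(\mathcal{L})$ for line arrangements, the rewriting of the left side of \eqref{Milnor} as the du~Plessis--Wall maximum $\tau(d,r)_{\max}=(d-1)^2-r(d-1-r)$, the section sequence for the syzygy bundle $\mathcal{E}_{\mathcal{L}}(r)$, and the Chern-class computation giving $\deg Z=\tau(d,r)_{\max}-\tau(\mathcal{L})$ are all exactly the ingredients Dimca uses. You have also correctly located the genuine work in the converse direction, where one must pass from the numerical condition $\deg Z=1$ back to the structural statement that $AR(f)$ has the three-generator, one-relation shape of Theorem~\ref{DimSti}; in Dimca's paper this is handled via the characterisation of nearly free curves through the defect $\nu(C)=\tau(d,r)_{\max}-\tau(C)$ and the self-duality of $N(f)$, which you allude to but do not carry out. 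If this were a statement the present paper claimed to prove, that step would need to be filled in; as it stands, the paper simply invokes the result.
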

\begin{example}
\label{d=8}
Let us consider the line arrangement $\mathcal{A} \subset \mathbb{P}^{2}_{\mathbb{C}}$ defined by the following equation
$$Q(x,y,z) = (x^2 + xy + y^{2})(y^{3}-z^{3})(z^{3}-x^{3}).$$
This arrangement consists of $8$ lines and delivers $4$ nodes and $8$ triple intersection points. Using \verb{Singular{ \cite{Singular} we can compute ${\rm mdr}(Q)$ which is equal to $4$. Since $r \leq d/2 = 4$, we can use the above criterion, namely
$$37 = 4^2 - 4\cdot 7 + 7^2 = \mu(\mathcal{A}) + 1 = 4 + 4\cdot 8 + 1,$$
so $\mathcal{A}$ is nearly free.
\end{example}

\section{Nearly free arrangements of line arrangements with nodes and triple points}
In order to provide a lower bound on the number of lines of nearly free line arrangements with nodes and triple points, we recall the following result by Dimca and Pokora \cite[Proposition 4.7]{DimcaPokora} which is adjusted to our purposes.
\begin{proposition}
Let $C \, : \, f = 0$ be an arrangement of $d$ lines in $\mathbb{P}^{2}_{\mathbb{C}}$ such that it has only nodes and triple intersection points. Then one has
$${\rm mdr}(f) \geq \frac{2}{3}d - 2.$$
\end{proposition}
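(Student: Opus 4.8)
The plan is to read the inequality off from the behaviour of a minimal Jacobian syzygy at the singular locus. Set $r={\rm mdr}(f)$ and fix a nonzero relation $(a,b,c)\in S_r^3$ of minimal degree, which I regard as the polynomial vector field $D=a\partial_x+b\partial_y+c\partial_z$. Since $af_x+bf_y+cf_z=0$, the field $D$ is logarithmic, i.e.\ tangent to every line of $\mathcal{L}$; being tangent to the ${\rm mult}_p$ distinct lines through a singular point $p$ forces $D(p)=0$, so $D$ vanishes at every node and every triple point. Minimality guarantees $D\neq 0$ and, moreover, that no line of $\mathcal{L}$ lies in the zero locus of $D$: if a linear form $h$ defining a component divided $a,b,c$, then dividing the relation by $h$ would yield a relation of degree $r-1$, a contradiction. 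In particular the restriction of $D$ to any component line is not identically zero.

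The combinatorial input is then elementary. Write $n_2$ and $n_3$ for the numbers of nodes and triple points. Counting pairs of lines gives $\binom{d}{2}=n_2+3n_3$, while for a fixed line $\ell$ the remaining $d-1$ lines meet it in $a_\ell+2b_\ell=d-1$ incidences, where $a_\ell$ and $b_\ell$ count the nodes and triple points lying on $\ell$. Restricting $D$ to $\ell\cong\mathbb{P}^1_{\mathbb{C}}$ produces a nonzero section of $\mathcal{O}_{\mathbb{P}^1}(r+1)$ that vanishes at each of the $a_\ell+b_\ell$ singular points on $\ell$, whence $a_\ell+b_\ell\leq r+1$. Subtracting this from $a_\ell+2b_\ell=d-1$ shows that every line carries at least $d-r-2$ triple points, and summing over the $d$ lines (each triple point lying on three of them) gives $3n_3\geq d(d-r-2)$, i.e.\ a lower bound for $n_3$ that grows as $r$ shrinks.

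To close one plays this lower bound for $n_3$ off against an upper bound for the number of triple points and solves for $r$. The crude ceiling $b_\ell\leq (d-1)/2$ already yields $n_3\leq d(d-1)/6$ and hence a bound of the desired shape, and the Hirzebruch-type inequality $n_2+\tfrac34 n_3\geq d$ (valid for complex arrangements that are neither pencils nor near-pencils) tightens the admissible range of $n_3$ further. I expect the genuine obstacle to lie exactly in this final squeezing: the per-line count controls only the \emph{number} of triple points on each line, not the finer local multiplicity of the zero of $D$ there, so extracting the precise coefficient $\tfrac23$ rather than a softer constant forces one to use the sharp global constraint on triple-point incidences available over $\mathbb{C}$ (equivalently, to exclude the dense, Fano-type configurations that the naive count would otherwise permit). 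This is the delicate step where the hypothesis that every singularity is a node or an ordinary triple point is used essentially, and where I would align the estimate with the computation of \cite{DimcaPokora}.
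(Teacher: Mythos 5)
Your setup (minimal syzygy viewed as a logarithmic vector field $D$, its vanishing at the singular points, restriction to each line) is a legitimate and standard device, but the argument has two genuine gaps, the second of which is fatal to the whole strategy. First, the non-vanishing claim: to see $D|_\ell$ as a section of $\mathcal{O}_{\mathbb{P}^1}(r+1)$ you must regard $D$ as a section of $T_{\mathbb{P}^2}(r-1)$, i.e.\ work modulo the Euler field, and such a section vanishes identically along $\ell : h=0$ as soon as $(a,b,c)\equiv \lambda\cdot(x,y,z) \pmod{h}$ for some $\lambda\in S_{r-1}$; this does \emph{not} force $h$ to divide $a,b,c$, so minimality of the syzygy does not rule it out. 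Concretely, for the near-pencil $f=z(x^{d-1}-y^{d-1})$ the minimal syzygy is $(x,y,-(d-1)z)$, whose entries have no common factor, yet the associated projective vector field vanishes identically on the line $z=0$, which carries $d-1$ singular points, so the per-line bound $a_\ell+b_\ell\leq r+1=2$ fails there. Near-pencils are of course excluded by the hypothesis on singularities, but your argument never uses that hypothesis at this step, so the statement ``$D|_\ell\not\equiv 0$'' requires an actual proof rather than the divisibility remark.

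Second, and decisively: the final ``squeezing'' that you defer cannot be carried out by \emph{any} upper bound on $n_3$. Your count gives exactly $r \geq d-2-\frac{3n_3}{d}$, so reaching $\frac{2}{3}d-2$ would require $n_3 \leq \frac{d^2}{9}$, and this is false under the hypotheses of the proposition: the dual Hesse arrangement has $d=9$, $n_2=0$, $n_3=12>\frac{81}{9}$, and on it your inequality yields only $r\geq 9-2-\frac{36}{9}=3$, while the proposition asserts (and this arrangement attains) $r=4$. So the loss built into the count --- each triple point contributes only a simple zero of $D|_\ell$ on each of its three lines --- is irrecoverable; Hirzebruch's inequality $n_2+\frac{3}{4}n_3\geq d$ gives $n_3\leq\frac{2d(d-3)}{9}$ and hence only $r\geq \frac{d}{3}$, which falls short of $\frac{2}{3}d-2$ for every $d>6$. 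The deeper reason is that the constant $\frac{2}{3}$ is not of combinatorial origin. The paper itself does not prove this proposition; it quotes it from \cite{DimcaPokora}, and the proof there rests on the Hodge-theoretic bound of Dimca--Sernesi, ${\rm mdr}(f)\geq \alpha_C\, d-2$ for reduced curves all of whose singularities are quasi-homogeneous, where $\alpha_C$ is the minimal Arnold exponent (log canonical threshold) of the singular points: a node has exponent $1$ and an ordinary triple point has exponent $\frac{2}{3}$, which produces $\frac{2}{3}d-2$ exactly (and is sharp for the dual Hesse arrangement). That analytic input is precisely what an incidence count of the zeros of $D$ cannot detect, so the approach needs to be replaced, not just completed.
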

If $C \, : f=0$ is a nearly free arrangement of $d$ lines with nodes and triple intersection points with the exponents $(d_{1}, d_{2})$, $d_{1}\leq d_{2}$, then ${\rm mdr}(f) = d_{1}$, and since 
$$2d_{1} \leq d_{1}+d_{2} =d$$
we obtain that ${\rm mdr}(f) \leq d/2$. Combining it with the above proposition, we arrive at
$$\frac{2}{3}d - 2 \leq {\rm mdr}(f) \leq d/2.$$
It gives us the following result.
\begin{proposition}
If $\mathcal{A} \subset \mathbb{P}^{2}_{\mathbb{C}}$ is a nearly free arrangement of $d$ lines with nodes and triple intersection points, then $d\leq 12$. 
\end{proposition}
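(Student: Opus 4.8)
The plan is to bound ${\rm mdr}(f)$ both from below and from above, and then to solve the resulting linear inequality in $d$. The lower bound is precisely the content of the preceding proposition: for any arrangement of $d$ lines carrying only nodes and triple intersection points one has ${\rm mdr}(f) \geq \frac{2}{3}d - 2$. This is the genuinely combinatorial input, and I would simply invoke it.

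For the upper bound I would exploit the structure of a nearly free curve. By Theorem \ref{DimSti}, the minimal free resolution of the Milnor algebra of a nearly free curve records a pair of exponents $(d_1,d_2)$ with $d_1 + d_2 = d$; ordering them so that $d_1 \leq d_2$, the minimal degree of a relation among the partial derivatives satisfies ${\rm mdr}(f) = d_1$. Since $2d_1 \leq d_1 + d_2 = d$, this gives ${\rm mdr}(f) \leq d/2$.

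Combining the two estimates yields $\frac{2}{3}d - 2 \leq {\rm mdr}(f) \leq \frac{d}{2}$, hence $\frac{2}{3}d - \frac{1}{2}d \leq 2$, that is $\frac{d}{6} \leq 2$, and therefore $d \leq 12$. The arithmetic is entirely routine; the only point requiring care is the identification ${\rm mdr}(f) = d_1$, namely that the smaller exponent is exactly the minimal degree of a syzygy among the partials. This is read off directly from the degrees of the first syzygy module in the resolution of Theorem \ref{DimSti}, and is where the structural work sits—the remainder is a one-line inequality.
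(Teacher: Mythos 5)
Your proposal is correct and follows essentially the same route as the paper: the lower bound ${\rm mdr}(f) \geq \frac{2}{3}d-2$ from the Dimca--Pokora proposition, the upper bound ${\rm mdr}(f) = d_{1} \leq d/2$ read off from the nearly free exponents $(d_{1},d_{2})$ with $d_{1}+d_{2}=d$, and the resulting inequality $\frac{2}{3}d - 2 \leq \frac{d}{2}$ giving $d \leq 12$. No discrepancies to report.
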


Based on the above proposition, our goal is the following.

\begin{problem}
Classify all weak combinatorics of line arrangements with nodes and triple points in $\mathbb{P}^{2}_{\mathbb{C}}$ which are nearly free.
\end{problem}
Here by the weak combinatorics, for an arrangement of lines $\mathcal{L}$, we mean the vector $(d;t_{2},t_{3})$, where $d$ is the number of lines, $t_{2}$ is the number of nodes, and $t_{3}$ is the number of triple points. Sometimes we will write $t_{i}(\mathcal{L})$ with $i \in \{2,3\}$ in order to emphasize the underlying arrangement of lines $\mathcal{L}$.
The first step towards the classification is the following proposition.
\begin{proposition}
\label{triple}
Let $\mathcal{L} \subset \mathbb{P}^{2}_{\mathbb{C}}$ be a nearly free arrangement of $d$ lines with nodes and triple intersection points. Then
\begin{equation}
\label{t3}
t_{3} \geq \frac{1}{4}\bigg(d^{2}-4d-1\bigg).
\end{equation}
\end{proposition}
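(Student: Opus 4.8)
The plan is to combine two elementary numerical identities for line arrangements with nodes and triple points with the nearly-free criterion of Theorem~\ref{Dim}, and then to recognize the resulting formula for $t_3$ as a quadratic in $r := {\rm mdr}(f)$ whose minimum value is precisely the claimed bound.

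First I would record the two basic counts. Since every pair of the $d$ lines meets in exactly one singular point, and a node absorbs one pair while a triple point absorbs three, one has
$$\binom{d}{2} = t_2 + 3t_3.$$
On the other hand, the total Milnor number of an arrangement with only nodes and triple points is
$$\mu(\mathcal{L}) = t_2 \cdot (2-1)^2 + t_3 \cdot (3-1)^2 = t_2 + 4t_3.$$

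Next, since $\mathcal{L}$ is nearly free we have $r \leq d/2$ (as established in the discussion preceding this proposition), so Theorem~\ref{Dim} applies and equation~\eqref{Milnor} holds:
$$r^2 - r(d-1) + (d-1)^2 = \mu(\mathcal{L}) + 1 = t_2 + 4t_3 + 1.$$
Substituting $t_2 = \binom{d}{2} - 3t_3$ from the first identity and solving for $t_3$ yields
$$t_3 = r^2 - r(d-1) + (d-1)^2 - \frac{d(d-1)}{2} - 1.$$

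Finally, I would view the right-hand side as a quadratic $g(r)$ in $r$ and complete the square. A short computation gives
$$g(r) = \left(r - \frac{d-1}{2}\right)^2 + \frac{d^2 - 4d - 1}{4},$$
whence $t_3 = g(r) \geq \tfrac{1}{4}(d^2 - 4d - 1)$ for every admissible value of $r$, which is exactly~\eqref{t3}. The only real content lies in this last algebraic step: the constant term of the completed square must come out to precisely $\tfrac{1}{4}(d^2-4d-1)$, which is what makes the bound tight and shows it holds independently of the actual minimal degree of a relation. I therefore do not expect a genuine obstacle beyond carrying out this elementary minimization correctly and keeping the arithmetic of the constant term exact.
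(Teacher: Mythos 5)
Your proof is correct and follows essentially the same route as the paper: both combine the count $\binom{d}{2}=t_2+3t_3$ with $\mu(\mathcal{L})=t_2+4t_3$ and the nearly-free identity of Theorem~\ref{Dim}, reducing everything to a quadratic relation in $r$. Where the paper rearranges this into a quadratic equation in $r$ and imposes nonnegativity of its discriminant, you solve for $t_3$ and complete the square in $r$ — an algebraically equivalent (and if anything cleaner) way of extracting the same inequality.
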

\begin{proof}
If $\mathcal{L}$ is nearly free with $r = {\rm mdr}(f)$, where $f \in \mathbb{C}_{d}[x,y,z]$ is the defining equation, then by Theorem \ref{Dim} one has
$$r^{2} - r(d-1) + (d-1)^2 = \mu(\mathcal{L})+1.$$
Let us recall that we have the following combinatorial count
$$\binom{d}{2} = \frac{d(d-1)}{2} = t_{2} + 3t_{3}.$$
Since 
$\mu(C) = t_{2} + 4t_{3} = \binom{d}{2}+t_{3}$,
we obtain
$$r^{2} - r(d-1) + (d-1)^2 = r^{2}-r(d-1)+d^{2}-2d+1 = \binom{d}{2}+t_{3}+1.$$
After simple manipulations, we arrive at
$$r^{2} - r(d-1) + \frac{d^{2}-3d-2t_{3}}{2}=0.$$
The above equation can have integer roots if $\triangle_{r} = (d-1)^2 - 2d^{2}+6d + 4t_{3} \geq 0$.
This leads us to
$$t_{3} \geq \frac{1}{4}\bigg(d^{2}-4d-1\bigg),$$
which complete the proof.
\end{proof}
In the next step, let us recall the following bound on the number of triple points for line arrangements which is due to Sch\"onheim \cite{Sch}. Define
$$U_{3}(d) := \bigg\lfloor \bigg\lfloor \frac{d-1}{2} \bigg\rfloor \cdot \frac{d}{3} \bigg\rfloor - \varepsilon(d),$$
where $\varepsilon(d)=1$ if $d = 5 \, {\rm mod}(6)$ and $\varepsilon(d) =0$ otherwise. Then
\begin{equation}
    t_{3} \leq U_{3}(d).
\end{equation}
If $\mathcal{L}$ is a nearly free arrangement with only nodes and triple intersection points, then
\begin{equation}
\label{in1}
\frac{1}{4}\bigg(d^{2}-4d-1\bigg) \leq t_{3} \leq U_{3}(d).
\end{equation}

Observe that for $d \in \{12,11,10\}$ the chain of inequalities in (\ref{in1}) leads us to a contradiction. 
\begin{corollary}
Let $\mathcal{L} \subset \mathbb{P}^{2}_{\mathbb{C}}$ be a nearly free arrangement of $d$ lines with only nodes and triple intersection points. Then
$$d \leq 9.$$
\end{corollary}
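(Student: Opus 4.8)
The plan is to leverage the two ingredients already assembled just above the statement: the bound $d \leq 12$ from the preceding proposition, and the chain of inequalities \eqref{in1}, namely
$$\frac{1}{4}\bigg(d^{2}-4d-1\bigg) \leq t_{3} \leq U_{3}(d).$$
Since $d \leq 12$ is already known, it suffices to rule out the three remaining values $d \in \{12,11,10\}$. For each such $d$ I would simply show that the combinatorial lower bound on $t_3$ coming from near freeness strictly exceeds the Sch\"onheim upper bound $U_3(d)$; this makes \eqref{in1} impossible, so no nearly free arrangement with nodes and triple points can have that many lines.

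Concretely, for $d=12$ the lower bound is $\tfrac14(144-48-1)=\tfrac{95}{4}=23.75$, while $U_3(12)=\big\lfloor\lfloor 11/2\rfloor\cdot 12/3\big\rfloor=\lfloor 5\cdot 4\rfloor=20$ (here $12\not\equiv 5 \bmod 6$, so $\varepsilon(12)=0$); for $d=11$ the lower bound is $\tfrac14(121-44-1)=19$, while $U_3(11)=\big\lfloor\lfloor 10/2\rfloor\cdot 11/3\big\rfloor-1=\lfloor 55/3\rfloor-1=18-1=17$ (note $11\equiv 5 \bmod 6$, so $\varepsilon(11)=1$); and for $d=10$ the lower bound is $\tfrac14(100-40-1)=\tfrac{59}{4}=14.75$, while $U_3(10)=\big\lfloor\lfloor 9/2\rfloor\cdot 10/3\big\rfloor=\lfloor 4\cdot 10/3\rfloor=\lfloor 40/3\rfloor=13$. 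In each case the forced lower bound on $t_3$ is larger than the largest possible value $U_3(d)$, which is the desired contradiction.

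There is no genuinely hard step here: once $d\le 12$ is in hand, the result is a finite check of three cases. The only point requiring care is the correct evaluation of $U_3(d)$, in particular respecting the nested floors and not forgetting the correction term $\varepsilon(d)$, which is nonzero precisely in the case $d=11$ and is exactly what tightens $U_3(11)$ from $18$ down to $17$; without it the $d=11$ case would still contradict \eqref{in1}, but I would flag it to keep the arithmetic honest. Having eliminated $d=12,11,10$, the conclusion $d \leq 9$ follows.
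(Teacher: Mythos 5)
Your proposal is correct and follows exactly the paper's argument: the paper also combines the bound $d\leq 12$ with the chain of inequalities (\ref{in1}) and observes that $d\in\{10,11,12\}$ leads to a contradiction, merely leaving the arithmetic implicit where you spell it out (correctly, including the $\varepsilon(11)=1$ correction). No gaps; your write-up is just a more explicit version of the paper's proof.
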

In the next chapter, we are going to indicate those values of $d \in \{4,5,6,7,8,9\}$ for which there exists a line arrangement with nodes and triple points that is nearly free. In order to do so, we are going to use some tricks regarding deletion and deformation-type arguments.
\section{Classification}
One of the tools that will help us is the following deformation type result which has a general meaning, and that is the reason why we formulate it for all reduced plane curves. Before we formulate it, let us present the following definition.
\begin{definition}
Let $\mathcal{C} \, : f=0$ be a reduced curve in $\mathbb{P}^{2}_{\mathbb{C}}$ of degree $d$. For $r = {\rm mdr}(f)$ we define the number
$$\eta(C) = r^{2} - r(d-1) + (d-1)^{2}.$$
Moreover, we define the total Tjurina number of $C$ by
$$\tau(C) := \sum_{p \in {\rm Sing}(C)} \tau_{p},$$
where $\tau_{p}$ denotes the (local) Tjurina number of $C$ at the singular point $p$.
\end{definition}
\begin{remark}
Observe that if $C$ is an arrangement of $d$ lines in $\mathbb{P}^{2}_{\mathbb{C}}$, then
$$\mu(C) = \tau(C),$$
and it follows from the fact that all the singular points for line arrangements are \emph{quasi-homogeneous}.
\end{remark}
\begin{proposition}
\label{prop:defor}
    Let $C : f = 0$ be a free reduced plane curve of degree $d$ having only nodes and $n_{3}\geq 1$ ordinary triple points. Let $C'$ be a reduced curve obtained by the following deformation performed on $C$:
\begin{center}
$(\star)$: a triple intersection point $p$ is deformed into three nodes.    
\end{center}
Denote by $f'=0$ the defining equation of $C'$. Assume that $\eta(C) = \eta(C')$ and ${\rm mdr}(f') \leq d/2$, then $C'$ is nearly free.
\end{proposition}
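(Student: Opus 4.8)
The plan is to translate the statement entirely into the language of the global Tjurina number, exploiting the fact (recorded in the Remark above) that nodes and ordinary triple points are quasi-homogeneous, so that $\mu = \tau$ for both $C$ and $C'$. The backbone is the du Plessis--Wall / Dimca characterization of free and nearly free curves in terms of $\tau$: for a reduced plane curve $D : g = 0$ of degree $d$ with $s = {\rm mdr}(g) \leq d/2$, the quantity $\eta(D) = s^2 - s(d-1) + (d-1)^2$ is precisely the maximal admissible value of $\tau(D)$, and one has $D$ free if and only if $\tau(D) = \eta(D)$, while $D$ is nearly free if and only if $\tau(D) = \eta(D) - 1$. For line arrangements this is exactly the content of Theorem \ref{Dim}, but since the deformed curve $C'$ has genuinely curved branches and is no longer a line arrangement, I would invoke the general reduced version of this criterion.

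First I would record the two inputs. Since $C$ is free, its exponents $(d_1,d_2)$ satisfy $d_1 + d_2 = d-1$ with ${\rm mdr}(f) = d_1$, and a direct computation gives
$$\tau(C) = (d-1)^2 - d_1 d_2 = d_1^2 + d_1 d_2 + d_2^2 = d_1^2 - d_1(d-1) + (d-1)^2 = \eta(C).$$
In particular ${\rm mdr}(f) = d_1 \leq (d-1)/2 < d/2$, so the criterion applies to $C$. The hypothesis $n_3 \geq 1$ merely guarantees that there is a triple point available to perform $(\star)$.

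Next comes the local deformation bookkeeping, which is the only genuinely geometric step. The deformation $(\star)$ is supported in a small neighbourhood of the chosen triple point $p$; it keeps $\deg C' = d$ and leaves every other singularity of $C$ unchanged, so the global Tjurina number is affected only through the replacement of $p$ by three nodes. An ordinary triple point is quasi-homogeneous with $\mu_p = \tau_p = (3-1)^2 = 4$, whereas three nodes contribute $3\cdot 1 = 3$; hence
$$\tau(C') = \tau(C) - 4 + 3 = \tau(C) - 1.$$
Combining this with $\tau(C) = \eta(C)$ and with the hypothesis $\eta(C) = \eta(C')$ yields
$$\tau(C') = \tau(C) - 1 = \eta(C) - 1 = \eta(C') - 1.$$
Since by assumption ${\rm mdr}(f') \leq d/2$, the Tjurina characterization of nearly free curves forces $C'$ to be nearly free, which completes the argument.

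The main obstacle I expect lies in the justification of the two structural facts rather than in any computation: on the one hand, one must appeal to the general (non-line-arrangement) form of the nearly free criterion, since Theorem \ref{Dim} as stated covers only arrangements and $C'$ falls outside that class; on the other hand, one must make precise that $(\star)$ is a purely local deformation, equisingular away from $p$ and degree-preserving, so that the identity $\tau(C') = \tau(C) - 1$ holds exactly and not merely as an inequality. Once these two points are secured, the remainder is the short chain of substitutions above.
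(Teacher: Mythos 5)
Your proposal is correct and follows essentially the same route as the paper: use the freeness of $C$ to get $\tau(C)=\eta(C)$ (the paper cites Dimca's Corollary~1.2 for this, which is your exponent computation), note that replacing a triple point ($\tau_p=4$) by three nodes ($\tau=3$) gives $\tau(C')=\tau(C)-1$, and then conclude via the general Tjurina-number criterion for nearly free reduced curves (the paper invokes \cite[Theorem 1.3]{Dimca1} directly, exactly the ``general reduced version'' you identified as necessary since Theorem~\ref{Dim} is stated only for line arrangements). The only cosmetic difference is that the paper writes the bookkeeping as $\tau(C')=(n_2+3)+4(n_3-1)$ rather than as a local $-4+3$ correction; the content is identical.
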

\begin{proof}
If $C$ is free, then by \cite[Corollary 1.2]{Dimca1} we have
$$\eta(C) = r^{2} - r(d-1) + (d-1)^{2} = \tau(C) = n_{2} + 4n_{3}.$$
Observe that 
$$\tau(C') = (n_{2} + 3) + 4(n_{3}-1) = n_{2} + 4n_{3} - 1,$$
so $\tau(C') + 1 = \tau(C)$. Since ${\rm mdr}(f')\leq d/2$, then in the light of \cite[Theorem 1.3]{Dimca1} $C'$ is nearly free, and it finishes the proof. 
\end{proof}
Let us explain this idea in detail by the forthcoming example.
\begin{example}
\label{A6}
Consider $A_{1}(6)$ arrangement defined by the equation
$$f(x,y,z) = xyz(x-y)\cdot (y-z)\cdot (x-z).$$
It is well-known that $t_{2}(\mathcal{A}_{1}(6)) = 3$, $t_{3}(\mathcal{A}_{1}(6)) = 4$, ${\rm mdr}(f) = 2$, and $\eta(\mathcal{A}_{1}(6))=19$. 
Now we consider the following deformation of $\mathcal{A}_{1}(6)$, denoted here by $\mathcal{A}_{6}$, given by the following equation
$$f'(x,y,z) = xyz(y-z)\cdot (x-z) \cdot \bigg(x - \frac{1}{2}y\bigg).$$
It has $t_{2}(\mathcal{A}_{6}) = 6$ and $t_{3}(\mathcal{A}_{6})=3$, ${\rm mdr}(f')=3$, and $\eta(A_{6})=19$.

Observe that $\mathcal{A}_{6}$ is nearly free, namely
$$\eta(\mathcal{A}_{6}) = 19 = t_{2}(\mathcal{A}_{6}) + 4t_{3}(\mathcal{A}_{6}) + 1 = \mu(\mathcal{A}_{6})+1 = \mu(\mathcal{A}_{1}(6)),$$
which completes our justification.
\end{example}
In this way, we have constructed a nearly free arrangement of $d=6$ lines. Now we are going to use results from \cite{Dums} in order to construct examples of nearly free arrangement of lines using our deformation argument, and at the end the deletion procedure.
\begin{enumerate}
    \item[$d=4$:] It is known that the maximal number of triple points for $4$ lines is equal to $1$. Moreover, this arrangement consisting of $t_{3}=1$, $t_{2}=3$, and $d=4$ is free since it is a supersolvable line arrangement. In this situation, as it turns out, we can apply our deformation argument above and conclude that an arrangement of $d=4$ lines and $6$ nodes is nearly free. 
    \item[$d=5$:] It is also known that the maximal number of triple points for $5$ lines is equal to $2$. Moreover, an arrangement with $d=5$, $t_{3}=2$, and $t_{2}$ is supersolvable, and thus it is free arrangement. Again, we can perform our deformation argument at one triple point, we obtain in that way an arrangement with $d=5$, $t_{3}=1$, and $t_{2}=7$, and this arrangement is nearly free.
    \item[$d=6$:] This case is covered by Example \ref{A6}.
    \item[$d=7$:] We know that over the complex numbers the maximal number of triple points for $7$ lines is equal to $6$. Consider the arrangement
    $$Q(x,y,z) = z\cdot (x^{2} -z^{2})\cdot (y^{2} - z^{2}) \cdot (y^{2}-x^{2}).$$
    We can check, using \verb{Singular{, that the arrangement $\mathcal{A}$ defined by $Q$ is free with the exponent $(3,3)$. Now we can deform arrangement $\mathcal{A}$ at one of the triple intersection points.
    Consider the following deformation
    $$G(x,y,z) =z\cdot (y^{2} - z^{2}) \cdot (x^{2}-z^{2})\cdot(y+x)\cdot(2y-x+2z).$$
    The arrangement $\mathcal{A}'$ defined by $G$ delivers $t_{3}=5$ and $t_{2}=6$. Using Proposition \ref{prop:defor}, we can conclude that indeed $\mathcal{A}'$ is nearly free. We can check this also directly, since ${\rm mdr}(G) = 3$, we have
    $$r^{2}-6\cdot r + 36 = 9 - 18 + 36 = \mu(\mathcal{A}') + 1 = 6 + 4\cdot 5 + 1 = 27.$$ 

\item[$d=8$:] This case is covered by Example \ref{d=8}. However, we want to explain a bit how this arrangements can be derived. Recall that the dual Hesse arrangement $\mathcal{H}$ of $d=9$ lines and $t_{3}=12$ is given by the following defining equation.
$$Q(x,y,z) = (x^3 - y^3 )\cdot (y^3 - z^3 )\cdot(z^3 - x^3 ).$$
Now we are going to remove one line. Since the picture is completely symmetric, we remove a line $\ell$ given by $x-y=0$.
Then
$$\tilde{Q}(x,y,z) = Q(x,y,z) / (x-y) = (x^{2}+xy+y^{2})\cdot(y^3 - z^3)\cdot(z^3-x^3),$$
so we arrive at the situation of Example \ref{d=8}. The procedure above is called in the literature as the deletion procedure. It is well-known that the above arrangement is nothing else than Mac Lane arrangement of $8$ lines, and it realizes the maximal number of triple points among line arrangements with $8$ lines, according to \cite{Dums}.
\end{enumerate}

The last case boils down to decide whether $d=9$ can occur, and it has a different flavour comparing with the previous cases. Using our bound (\ref{t3}) we see that for $d=9$ our nearly free arrangement should have $t_{3}\geq 11$ triple points. If $t_{3}=12$, then this is the dual Hesse arrangement which is unique up to a projective equivalence, and the dual Hesse arrangement is free. Due to this reason, our problem reduces to decide whether there exists and arrangement of $d=9$ lines with $t_{3}=11$ and $t_{2}=3$. In order to approach this problem, we use database described in \cite{Mat}. It turns out that there exists the only one matorid of $d=9$ lines and $11$ triple points. However, this matroid cannot be realized as a line arrangement over any field due to method explained in \cite[Section 4]{Bar} - in fact in this case there is  failure of Dress-Wentzl's valuation criterion. This observation finishes our classification procedure. Based on that, we are ready to formulate our main result.
\begin{theorem}
Let $\mathcal{L}\subset \mathbb{P}^{2}_{\mathbb{C}}$ be an arrangement of $d$ lines with nodes and triple intersection points that is nearly free. Then $d \in \{4,5,6,7,8\}$.
\end{theorem}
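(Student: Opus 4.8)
The stated conclusion is a necessary condition, so to prove it I would bound $d$ from above and below and then eliminate the single remaining candidate $d=9$; the explicit realizations for the other values certify that the range cannot be shrunk any further.

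For the upper bound I would simply invoke the Corollary established above: the lower bound $t_{3} \geq \tfrac14(d^{2}-4d-1)$ of Proposition~\ref{triple} together with the Sch\"onheim bound $t_{3} \leq U_{3}(d)$ make the chain~(\ref{in1}) impossible for $d \in \{10,11,12\}$, so $d \leq 9$. For the lower bound I would recall that an arrangement of at most three lines is free, hence has $N(f)=0$ and cannot be nearly free; thus $d \geq 4$, and the candidate set is narrowed to $d \in \{4,5,6,7,8,9\}$.

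To see that each value $d \in \{4,5,6,7,8\}$ actually occurs, the efficient tool is the deformation Proposition~\ref{prop:defor}: beginning from a free arrangement that carries at least one ordinary triple point, I apply the move $(\star)$ at a single triple point, which decreases $\tau$ by exactly one while leaving $\eta$ unchanged, and then verify $\mathrm{mdr}(f') \leq d/2$ so that Theorem~\ref{Dim} certifies near-freeness. Concretely these values are realized by generic arrangements for $d=4,5$, by the arrangement $\mathcal{A}_{1}(6)$ of Example~\ref{A6} for $d=6$, by the free arrangement defined by $z(x^{2}-z^{2})(y^{2}-z^{2})(y^{2}-x^{2})$ for $d=7$, and by the Mac Lane arrangement of Example~\ref{d=8}, obtained by deleting one line from the dual Hesse arrangement, for $d=8$.

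The decisive and genuinely hard step is excluding $d=9$, since here the bound~(\ref{t3}) forces $t_{3} \geq 11$ while $U_{3}(9)=12$, so both remaining possibilities survive the numerics. The case $t_{3}=12$ is exactly the dual Hesse arrangement, which is free and hence not nearly free, so it is discarded at once. The only survivor is a configuration of nine lines with $t_{3}=11$ and $t_{2}=3$; one checks that its incidence data determine a unique matroid, and the real obstacle is to prove that this matroid admits no realization over $\mathbb{C}$. This is not a numerical contradiction but a genuine non-realizability phenomenon, which I would resolve using the matroid database of \cite{Mat} together with the valuation-criterion method of \cite{Bar}, exhibiting the failure of the Dress--Wenzel valuation criterion; a direct coordinate argument showing that the eleven concurrence conditions are mutually inconsistent would be an alternative. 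Once $d=9$ is excluded, combining it with the upper and lower bounds and the realized values gives precisely $d \in \{4,5,6,7,8\}$.
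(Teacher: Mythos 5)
Your proposal follows the paper's own route essentially step for step: the upper bound $d \leq 9$ via the corollary combining Proposition~\ref{triple} with the Sch\"onheim bound, the realizations via the deformation move $(\star)$ of Proposition~\ref{prop:defor} together with the deletion construction for $d=8$, and the exclusion of $d=9$ via the uniqueness of the matroid with weak combinatorics $(9;3,11)$ in the database of \cite{Mat} and its non-realizability over any field by the valuation-criterion method of \cite{Bar}. Your explicit treatment of the lower bound $d \geq 4$ (arrangements of at most three lines are free, hence have $N(f)=0$ and cannot be nearly free) is a small point the paper leaves implicit, and it is a welcome addition. One factual slip, which affects only the sharpness claim and not the necessity statement itself: the generic arrangement of $5$ lines has $t_{2}=10$ and $t_{3}=0$, so it violates the bound $t_{3} \geq \frac{1}{4}(d^{2}-4d-1)=1$ of Proposition~\ref{triple} and is therefore not nearly free (equivalently, the required equation $r^{2}-4r+5=0$ has no integer root); the correct realization for $d=5$, as in the paper, is the weak combinatorics $(5;7,1)$ obtained by applying $(\star)$ at one triple point of the supersolvable arrangement with $t_{3}=2$. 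For $d=4$ your claim is fine, since the generic arrangement $(4;6,0)$ is indeed the nearly free one.
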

Now our aim is to find all weak combinatorics $\mathcal{C} = (d;t_{2},t_{3})$ with $d \in \{4,5,6,7,8\}$ such that $\mathcal{C}$ can be geometrically realized over the complex numbers as an arrangement of $d$ lines with prescribed $t_{2}$ and $t_{3}$ such that $\mathcal{C}$ is nearly free. We can use a classification result that comes from \cite{Mat} and Proposition \ref{triple}. 
\begin{enumerate}
    \item[$d=4$:] We should have $t_{3}\geq 0$, and the only possibility to a have nearly free and not free arrangement is $(4;6,0)$.
    \item[$d=5$:] We should have $t_{3} \geq 1$, and the only possibility to have a nearly free and not free arrangement is $(5;7,1)$.
    \item[$d=6$:] We should have $t_{3} \geq 3$, and the only possibility to have a nearly free and not free arrangement is $(6;6,3)$.
    \item[$d=7$:] We should have $t_{3} \geq 5$, and the only possibility to have a nearly free and not free arrangement is $(7;6,5)$.
    \item[$d=8$:] We should have $t_{3} \geq 7$ and it turns out that we have two possibilities, namely $(8;4,8)$ or $(8;7,7)$. We need to decide whether an arrangement with the weak combinatorics $\mathcal{C} = (8;7,7)$ is nearly free -- of course such an arrangement can be constructed over the real numbers. If such an arrangement would be nearly free, then by Theorem \ref{Dim} the following polynomial must have integer roots
    $$r^{2} -7r + 49 = r^{2}-r(d-1)+(d-1)^2 = \mu(\mathcal{C}) + 1 = t_{2} + 4t_{3}+1 = 7 + 4\cdot 7 + 1 = 36.$$
    Of course equation $r^{2} - 7r + 13=0$ does not have integer roots, so the weak combinatorics $(8;7,7)$ does not rise to a nearly free arrangement.
\end{enumerate}
Based on the above considerations, \textbf{there are exactly $5$ weak combinatorics $(d;t_{2},t_{3})$ leading to nearly free arrangements of $d$ lines with $t_{2}$ double and $t_{3}$ triple intersection points}.
\section*{Acknowledgments}
The author would like to thank Piotr Pokora for his guidance. We want to thank Lukas K\"uhne for his explanations regarding the non-existence of an arrangement with $d=9$ lines and $11$ triple points, and to Alexandru Dimca for helpful suggestions that allowed to improve the note. 

The author was partially supported by the National Science Center (Poland) Preludium Grant Nr \textbf{UMO 2018/31/N/ST1/02101}.

\vskip 0.5 cm

\bigskip
Jakub Kabat \\
Department of Mathematics,
Pedagogical University of Krakow,
ul. Podchorazych 2,
PL-30-084 Krak\'ow, Poland. \\
\nopagebreak
\textit{E-mail address:} \texttt{jakub.kabat@up.krakow.pl}
\bigskip
\end{document}